\theoremstyle{plain}
\newtheorem{theorem}{Theorem}[section]
\newtheorem{proposition}[theorem]{Proposition}
\newtheorem{lemma}[theorem]{Lemma}
\theoremstyle{definition}
\newtheorem*{definition}{Definition}
\theoremstyle{remark}
\newcommand{\Z}{\mathbb{Z}}
\newcommand{\Gc}{\mathcal{G}}
\begin{document}

\title[Remarks on the Liechti-Strenner's examples having small dilatations]{Remarks on the Liechti-Strenner's examples having small dilatations}

\author{Ji-Young Ham and Joongul Lee}
\address{Department of Science, Hongik  University, 
94 Wausan-ro, Mapo-gu, Seoul,
04066,\\
  Korea} 
\email{jyham@hongik.ac.kr}

\address{Department of Mathematics Education, Hongik University, 
94 Wausan-ro, Mapo-gu, Seoul,
04066\\
   Korea} 
\email{jglee@hongik.ac.kr}

\subjclass[2010]{37E30, 37B40, 57M60}

\keywords{Minimal dilatation, Nonorientable surface, Liechti-Strenner, pseudo-Anosov stretch factors}

\begin{abstract}
We show that the Liechti-Strenner's example for the closed nonorientable surface in~\cite{LiechtiStrenner18} minimizes the dilatation within the class of pseudo-Anosov homeomorphisms with an orientable invariant foliation and all but the first coefficient of the characteristic polynomial of the action induced on the first cohomology nonpositive. We also show that the Liechti-Strenner's example of orientation-reversing homeomorphism for the closed orientable surface in~\cite{LiechtiStrenner18} minimizes the dilatation within the class of pseudo-Anosov homeomorphisms with an orientable invariant foliation and all but the first coefficient of the characteristic polynomial $p(x)$ of the action induced on the first cohomology nonpositive or all but the first coefficient of $p(x) (x \pm 1)^2$, $p(x) (x^2 \pm 1)$, or $p(x) (x^2 \pm x + 1)$ nonpositive.
\end{abstract}

\maketitle
\section{Introduction} 

Let $\Sigma_g$ be a closed surface of finite genus.
A homeomorphism $h$ of $\Sigma_g$ is called \emph{pseudo-Anosov} if there is a pair of transversely measured foliations 
$\mathcal{F}^u$ and $\mathcal{F}^s$ in $\Sigma$ and a real number $\lambda >1$ such that $h(\mathcal{F}^u)=\lambda \mathcal{F}^u$ and  $h(\mathcal{F}^s)=1/\lambda \mathcal{F}^s$~\cite{Thurston88,CassonBleiler88}.
The number $\lambda$ is called the \emph{dilatation} of $h$ and the logarithm of $\lambda$ is called the \emph{topological entropy}. The set of dilatations of pseudo-Anosov homeomorphisms of the group of isotopy classes $\Sigma_g$ is discrete~\cite{ArnouxYoccoz81, Ivanov88}. In particular, there exists the minimal dilatation.

The dilatation of a pseudo-Anosov homeomorphism of $\Sigma_g$ measures its dynamical complexity. 
Furthermore, the collection of topological entropies has a geometric interpretation as the collection of Teichm\"uller distances
between Riemann surfaces of the same topological type as $\Sigma_g$~\cite{Abikoff80} if $\Sigma_g$ is orientable (as a subcollection of Teichmu\"ller distances between those of the same topological type as the orientable double cover of $\Sigma_g$ if $\Sigma_g$ is nonorientable).
In particular, the logarithm of the minimal dilatation of a genus $g$ surface gives the length of the systole for the genus $g$ moduli space if $\Sigma_g$ is orientable.

For an orientable surface $S_g$, several results have been known on the bounds of the minimal dilatation, $\delta_g$, for all pseudo-Anosov homeomorphisms of 
$S_g$.
Penner gave upper and lower bounds for the dilatations of $S_g$, and
proved that as $g$ tends to infinity, the minimal dilatation tends to one (the logarithm of the minimal dilatation tends to zero on the order of 
$1/g$)~\cite{Penner91}.
The upper bound was improved by Bauer for closed surfaces of genus $g\geq3$~\cite{Bauer92}.

However, the exact value of the minimal dilatation $\delta_g$ of $S_g$ has been
found only when the genus $g$ is two~\cite{ChoHam08}.

More is known for the minimal dilatation of orientation-preserving pseudo-Anosov homeomorphisms on $S_g$ with orientable invariant foliations. Denote the minimal dilatation of orientation-preserving pseudo-Anosov homeomorphisms on $S_g$ with orientable invariant foliations by $\delta^{+}(S_g)$. The following Table~\ref{table:deltaplus} shows the known values.

\begin{table} 
\begin{tabular}{|c||c||l|} \hline
$g$ &  $\delta^{+}(S_g) \approx$ & Minimal polynomial of $\delta^{+}(S_g)$\\\hline
1 & $2.61803$ & $x^2-3 x+1$ \\
2 & $1.72208$ & $x^4 - x^3 - x^2 - x + 1$ \\
3 &$1.40127$ & $x^6-x^4-x^3-x^2+1$ \\
4 &$1.28064$ & $x^8-x^5-x^4-x^3+1$ \\
5 &$1.17628$ & $x^{10}+x^9-x^7-x^6-x^5-x^4-x^3+x+1=\frac{x^{12}-x^7-x^6-x^5+1}{x^2- x+1}$ \\
7 & $1.11548$ & $x^{14}+x^{13}-x^9-x^8-x^7-x^6-x^5+x+1$ \\
8 & $1.12876$ & $x^{16}-x^9-x^8-x^7+1$ \\
\hline
\end{tabular}
\caption{The known values of $\delta^{+}(S_g)$}
\label{table:deltaplus}
\end{table}
The pseudo-Anosov homeomorphisms realizing $\delta^{+}(S_g)$ in Table~\ref{table:deltaplus} were constructed by 
Zhirov~\cite{Zhirov95} for $g=2$,
Lanneau and Thiffeault~\cite{LanneauThiffeault11b} for $g=3$ and $4$, 
Leiniger~\cite{Leininger04} for $G=5$, 
Kin and Takasawa~\cite{KinTakasawa13} and Aaber and Dunfield~\cite{AaberDunfield10} for $g=7$, and
Hironaka~\cite{Hironaka10} for $g=8$. Hironaka~\cite{Hironaka10} then showed that all of the examples above except the g = 7 example arise from the fibration of the link complement of $6_2^2$.
 From genus $6$ to genus $8$, each example identified the lower bound calculated by Lanneau and Thiffeault~\cite{LanneauThiffeault11b} as the minimal dilatation. 

Recently, Liechti and Strenner~\cite{LiechtiStrenner18} determined the minimal dilatation of pseudo-Anosov homeomorphisms with orientable invariant foliations on the closed nonorientable surfaces of genus 4, 5, 6, 7, 8, 10, 12, 14, 16, 18 and 20 and the minimal dilatation of orientation-reversing pseudo-Anosov homeomorphisms with orientable invariant foliations on the closed orientable surfaces of genus 1, 3, 5, 7, 9, and 11. 
 Denote by $N_g$ the closed nonorientable surface of genus $g$ and by 
$\delta^{+} (N_g)$ the minimal dilatation among pseudo-Anosov homeomorphisms of $N_g$ with an orientable invariant foliation.
Denote the minimal dilatation among orientation-reversing pseudo-Anosov homeomorphisms on $S_g$ with orientable invariant foliations by $\delta^{+}_{rev}(S_g)$.  The values determined by Liechti and Strenner~\cite{LiechtiStrenner18} are in  Table~\ref{table:nonorientable} and Table~\ref{table:reversing}.

\begin{table} 
\begin{tabular}{|c||c||l|} \hline
$g$ &  $\delta^{+}(N_g) \approx$ & Minimal polynomial of $\delta^{+}(N_g)$\\\hline
4 & $1.83929$ & $x^3-x^2- x-1$ \\
5 & $1.51288$ & $x^4 - x^3 - x^2 + x - 1$ \\
6 &$1.42911$ & $x^5-x^3-x^2-1$ \\
7 &$1.42198$ & $x^6-x^5-x^4-x^3+x-1$ \\
8 &$1.28845$ & $x^7-x^4-x^3-1$ \\
10 & $1.21728$ & $x^{11}-x^6-x^5-1$ \\
12 & $1.17429$ & $x^{13}-x^7-x^6-1$ \\
14 & $1.14551$ & $x^{15}-x^8-x^7-1$ \\
16 & $1.12488$ & $x^{16}-x^9-x^8-x^7+1$ \\
18 & $1.10938$ & $x^{17}-x^9-x^8-1$ \\
20 & $1.09730$ & $x^{19}-x^{10}-x^9-1$ \\
\hline
\end{tabular}
\caption{The known values of $\delta^{+}(N_g)$}
\label{table:nonorientable}
\end{table}

\begin{table} 
\begin{tabular}{|c||c||l|} \hline
$g$ &  $\delta^{+}_{rev}(S_g) \approx$ & Minimal polynomial of $\delta^{+}_{rev}(S_g)$\\\hline
1 & $1.61803$ & $x^2- x-1$ \\
3 & $1.25207$ & $\frac{x^8 - x^5 - x^3- 1}{x^2+1}$ \\
5 &$1.15973$ & $\frac{x^{12} - x^7 - x^5- 1}{x^2+1}$ \\
7 &$1.11707$ & $\frac{x^{16} - x^9 - x^7- 1}{x^2+1}$ \\
9 &$1.09244$ & $\frac{x^{20} - x^{11} - x^9- 1}{x^2+1}$ \\
11 & $1.07638$ & $x^{24}-x^{13}-x^{11}-1$ \\
\hline
\end{tabular}
\caption{The known values of $\delta^{+}_{rev}(S_g)$}
\label{table:reversing}
\end{table}

The main purpose of the paper is to show that the Liechti-Strenner's example for the closed nonorientable surface in~\cite{LiechtiStrenner18} minimizes the dilatation within the class of pseudo-Anosov homeomorphisms with an orientable invariant foliation and all but the first coefficient of the characteristic polynomial of the action induced on the first cohomology nonpositive. We also show that the Liechti-Strenner's example of orientation-reversing homeomorphism for the closed orientable surface in~\cite{LiechtiStrenner18} minimizes the dilatation within the class of pseudo-Anosov homeomorphisms with an orientable invariant foliation and all but the first coefficient of the characteristic polynomial $p(x)$ of the action induced on the first cohomology nonpositive or all but the first coefficient of $p(x) (x \pm 1)^2$, $p(x) (x^2 \pm 1)$, or $p(x) (x^2 \pm x + 1)$ nonpositive.
\begin{theorem} \label{thm:N}
Denote by $N_{2k}$ the closed nonorientable surface of genus $2k$. 
For all $k \geq 2$, the largest positive root of
$$ x^{2k-1}-x^k-x^{k-1}-1$$ minimizes the dilatation within the class of pseudo-Anosov homeomorphisms with an orientable invariant foliation and all but the first coefficient of the characteristic polynomial of the action induced on the first cohomology nonpositive.
\end{theorem}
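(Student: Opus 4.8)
The proof splits into showing that $\lambda_k$ is attained in the class and that nothing smaller is. For attainment I would invoke the Liechti--Strenner construction in \cite{LiechtiStrenner18}: for every $k\ge 2$ it produces a pseudo-Anosov $h_k$ of $N_{2k}$ with orientable invariant foliation whose dilatation is the largest root $\lambda_k$ of $p_k(x):=x^{2k-1}-x^k-x^{k-1}-1$ and whose action on $H^{1}(N_{2k};\mathbb{R})$ has characteristic polynomial $p_k$; since the non-leading coefficients of $p_k$ are $0$ and $-1$, the map $h_k$ lies in the class, so it remains to prove the bound $\lambda\ge\lambda_k$ for an arbitrary member $h$.

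Let $p(x)=\det(xI-h^{*})$ for $h^{*}$ acting on $H^{1}(N_{2k};\mathbb{R})\cong\mathbb{R}^{2k-1}$. By hypothesis $p(x)=x^{2k-1}-\sum_{i=0}^{2k-2}b_ix^i$ with every $b_i\ge 0$, and $b_0=|{\det h^{*}}|=1$ since $h^{*}\in GL_{2k-1}(\mathbb{Z})$. Descartes' rule gives a unique positive root; as the companion matrix of $p$ is a primitive nonnegative integer matrix, Perron--Frobenius identifies this root with the spectral radius of $h^{*}$, which equals $\lambda$ because the foliation is orientable, so $\lambda>1$ and $p(1)<0$, i.e.\ $\sum_i b_i\ge 2$. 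Passing to the orientation double cover $S_{2k-1}\to N_{2k}$, the lift $\tilde h$ is pseudo-Anosov with orientable foliation and the same dilatation, so $\lambda$ is the spectral radius of the symplectic action on $H^{1}(S_{2k-1};\mathbb{R})$, whose characteristic polynomial is the reciprocal polynomial $q(x)=p(x)\tilde p(x)$ of degree $4k-2$ with $\tilde p(x)=-x^{2k-1}p(1/x)$; hence every root of $p$ lies in the annulus $1/\lambda\le|z|\le\lambda$, and $\lambda$ is the spectral radius of $q$ as well. Writing $q(x)=x^{2k-1}r(x+x^{-1})$ with $r$ monic of degree $2k-1$, the number $\mu:=\lambda+\lambda^{-1}$ is the largest root of $r$; for $h_k$ one computes $q_k(x)=x^{4k-2}-x^{2k}-4x^{2k-1}-x^{2k-2}+1$ and $r_k(y)=D_{2k-1}(y)-y-4$, where $D_m$ is the Dickson polynomial defined by $x^m+x^{-m}=D_m(x+x^{-1})$, so that $\mu_k:=\lambda_k+\lambda_k^{-1}$ is the largest root of $r_k$. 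In addition one imports from \cite{LiechtiStrenner18} the realizability restrictions they place on such $p$ beyond these purely spectral ones, which are what actually pin the problem down.

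The remaining task is the inequality $\mu\ge\mu_k$, equivalently $\lambda\ge\lambda_k$, and this is where the work lies. I would argue by contradiction: if $\lambda<\lambda_k$ then $p>0$ on $(\lambda,\infty)$, so $p(\lambda_k)>0$, which by the identity $\lambda_k^{2k-1}=\lambda_k^{k}+\lambda_k^{k-1}+1$ reads $\sum_{i=0}^{2k-2}b_i\lambda_k^{\,i}<\lambda_k^{\,k}+\lambda_k^{\,k-1}+1$. Combining this with $b_0=1$, $\sum_i b_i\ge 2$, the integrality of the $b_i$, the annulus constraint applied to $\tilde p$, and the Liechti--Strenner realizability restrictions, one runs a Lanneau--Thiffeault-style elimination of coefficient patterns: the geometrically increasing weights $\lambda_k^{\,i}$ and the annulus constraint together push the support of $p$ onto the indices $\{0,k-1,k,2k-1\}$ with $b_0=b_{k-1}=b_k=1$, leaving only $p=p_k$, contradicting $\lambda<\lambda_k$. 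It is cleanest to carry this out on the trace polynomials $r$ and $r_k$, comparing $D_{2k-1}(y)-y-4-r(y)$ at $\mu_k$ and using that $\mu_k$ is the largest root of $r_k$.

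The main obstacle is making this elimination both correct and uniform in $k$. On the one hand, the purely spectral conditions (nonpositive non-leading coefficients, unit constant term, roots in the annulus) are for several small genera not strong enough by themselves to exclude competitors with Perron root below $\lambda_k$ --- for instance $x^{3}-x^{2}-1$ when $k=2$, or $x^{9}-x^{6}-1$ when $k=5$ --- so one must either fall back on the explicit determination of $\delta^{+}(N_{2k})$ in \cite{LiechtiStrenner18} for those finitely many $k$, or use their sharper realizability criterion throughout; isolating the precise form of that criterion and checking that it performs the separation is the crux. On the other hand, since $\lambda_k\to 1$ as $k\to\infty$ while $\lambda_k^{\,k}\to 1+\sqrt{2}$ stays bounded, the comparison $\sum_i b_i\lambda_k^{\,i}$ versus $\lambda_k^{\,k}+\lambda_k^{\,k-1}+1$ must be organized around the scale $\lambda_k^{\,k}$ so that the thresholds in the coefficient-by-coefficient argument do not grow with $k$; producing this uniform version is the technical heart of the proof.
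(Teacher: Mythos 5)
There is a genuine gap: your argument stops exactly where the work is. You reduce the theorem to the inequality $\lambda\ge\lambda_k$ for an arbitrary member of the class, observe yourself that the conditions you have extracted (nonpositive coefficients, unit constant term, roots in the annulus) do \emph{not} suffice --- citing $x^{3}-x^{2}-1$ and $x^{9}-x^{6}-1$ as competitors with smaller Perron root --- and then defer to unspecified ``realizability restrictions'' from \cite{LiechtiStrenner18} and a ``Lanneau--Thiffeault-style elimination'' that you do not carry out, explicitly calling its isolation and its uniformity in $k$ the crux. That is a plan, not a proof. The missing ingredient is concrete and is the heart of the paper's argument: by Proposition 4.1(4) of \cite{LiechtiStrenner18}, the characteristic polynomial of the action on first cohomology is \emph{reciprocal mod $2$}, so in $p(x)=x^{2k-1}-a_{2k-2}x^{2k-2}-\cdots-a_1x-1$ one has $a_i\equiv a_{2k-1-i}\pmod 2$ (the paper's Lemma~\ref{lemma:mod2}(1)). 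This single constraint kills precisely your problematic competitors: $x^{3}-x^{2}-1$ fails $a_1\equiv a_2$, $x^{9}-x^{6}-1$ fails $a_3\equiv a_6$, and in general no polynomial in the class can have an ``unpaired'' nonzero coefficient.

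Once that is in hand, no elimination of coefficient patterns, no double cover, and no trace/Dickson polynomial reduction are needed, and the uniformity-in-$k$ worry evaporates. Since the nonleading coefficients are nonpositive, the (transposed) companion matrix of $p$ is nonnegative; the mod-$2$ symmetry forces nonzero coefficients to occur in symmetric pairs, so the companion matrix of any $p$ in the class dominates entrywise the companion matrix of some $x^{2k-1}-x^{2k-1-i}-x^{i}-1$, and the Perron--Frobenius monotonicity lemma (the paper's Lemma~\ref{lemma:Perron}) gives $\lambda\ge$ the Perron root of that two-term polynomial. Finally, an elementary calculus computation (the paper's Lemma~\ref{lemma:minispec}: treat the exponent $i$ as a continuous variable and check $\partial G/\partial i\ge 0$ for $x\ge1$) shows the Perron root of $x^{2k-1}-x^{2k-1-i}-x^{i}-1$ is minimized at the middle pair $i\in\{k-1,k\}$, i.e.\ by $x^{2k-1}-x^{k}-x^{k-1}-1$; realization is by \cite[Proposition 2.6]{LiechtiStrenner18}, as you say. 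Your attainment step and your general setup (nonnegative companion matrix, Perron root equals the dilatation) are fine, but without identifying the mod-$2$ reciprocity constraint and the monotonicity-plus-calculus comparison, the central inequality is not established.
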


\begin{theorem} \label{thm:S}
 Denote by $S_{2k-1}$ the closed orientable surface of genus $2k-1$.
For all $k \geq 2$, the largest positive root of
$$ x^{4k}-x^{2k+1}-x^{2k-1}-1$$ minimizes the dilatation within the class of pseudo-Anosov homeomorphisms with an orientable invariant foliation and all but the first coefficient of the characteristic polynomial $p(x)$ of the action induced on the first cohomology nonpositive or all but the first coefficient of $p(x) (x \pm 1)^2$, $p(x) (x^2 \pm 1)$, or $p(x) (x^2 \pm x + 1)$ nonpositive.
\end{theorem}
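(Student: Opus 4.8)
The plan is to convert both statements into a lower bound for the house (the largest modulus of a root) of the characteristic polynomial of the induced cohomological action, and then to prove that bound.

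\emph{Reduction.} Let $h$ be a pseudo-Anosov map in the relevant class. Orientability of the invariant foliation forces the dilatation $\lambda$ to be the spectral radius of the integral action $h^{*}$ on first cohomology; moreover, being a pseudo-Anosov stretch factor, $\lambda$ is strictly larger in modulus than every other eigenvalue of $h^{*}$ and is a simple root of its characteristic polynomial. For Theorem~\ref{thm:S} the map $h$ is orientation-reversing, so $h^{*}$ is anti-symplectic; since the genus $2k-1$ is odd, a Pfaffian computation gives $\det h^{*}=-1$, whence the characteristic polynomial $p(x)$, of degree $4k-2$, satisfies $p(x)=-x^{4k-2}p(-1/x)$ and has constant term $-1$. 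By hypothesis one of $1,(x\pm1)^{2},x^{2}\pm1,x^{2}\pm x+1$ multiplies $p$ into a polynomial $P$ with all non-leading coefficients $\le 0$; because $p$ has constant term $-1$ this cannot be the anti-reciprocal factor $x^{2}-1$, while the remaining factors are reciprocal with all roots on the unit circle, so $P$ is anti-reciprocal of degree $d\in\{4k-2,4k\}$, has non-positive non-leading coefficients, and still has $\lambda$ as a simple root of maximal modulus. For Theorem~\ref{thm:N} one lifts to the orientation double cover $S_{2k-1}\to N_{2k}$: the orientation-preserving lift $\tilde h$ is an orientation-preserving pseudo-Anosov map with orientable foliation commuting with a free orientation-reversing involution $\sigma$, and the characteristic polynomial of $\tilde h^{*}$ on $H^{1}(S_{2k-1})$ is $P(x)=p(x)\,p^{\sigma}(x)$, where $p$ is the degree-$(2k-1)$ polynomial of the hypothesis and $p^{\sigma}(x)=\pm x^{2k-1}p(1/x)$ is its reciprocal mate; thus $P$ is reciprocal of degree $4k-2$ and has $\lambda$ as a simple root of maximal modulus.

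\emph{The key lower bound.} The heart of the matter is then the following: a monic integer polynomial which is reciprocal or anti-reciprocal, whose non-leading coefficients are all $\le 0$ (in the setting of Theorem~\ref{thm:N}, which factors as $p\,p^{\sigma}$ for such a $p$), and which has a \emph{unique} root of maximal modulus $\lambda>1$, satisfies $\lambda\ge\rho_{d}$, where $\rho_{d}$ is the largest root of the trinomial-plus-constant obtained by placing the negative weight as close to the center exponent $d/2$ as the parity, the (anti-)reciprocity, and the uniqueness of the dominant root permit. The decisive use of the uniqueness hypothesis is to exclude the balanced polynomials $x^{d}-x^{d/2}-1$ and their closest relatives, whose maximal modulus is attained $d/2$ times and which would otherwise yield a strictly smaller house; once that symmetry is forced to break, one shows the house is minimized by concentrating the weight on a single symmetric pair of exponents adjacent to the forbidden ones, giving $x^{d}-x^{d/2+1}-x^{d/2-1}-1$ in the anti-reciprocal even case and its analogue otherwise. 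I would carry this out by folding $P$ under the substitution $x\mapsto x+x^{-1}$ in the reciprocal case and $x\mapsto x-x^{-1}$ in the anti-reciprocal case, transporting the coefficient and dominance conditions to the resulting polynomial of half the degree, and estimating its largest real root. Comparing the two admissible degrees $4k-2$ and $4k$ in Theorem~\ref{thm:S} then shows the overall minimum, over all admissible multipliers including $1$, is the largest root of $x^{4k}-x^{2k+1}-x^{2k-1}-1$; the same analysis, now applied to the half-polynomial $p$ whose reciprocal completion $p\,p^{\sigma}$ has degree $4k-2$, yields $x^{2k-1}-x^{k}-x^{k-1}-1$ in Theorem~\ref{thm:N}.

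\emph{Sharpness and the main obstacle.} To see the bounds are attained, I would take from \cite{LiechtiStrenner18} the orientation-reversing example on $S_{2k-1}$ (resp.\ the nonorientable example on $N_{2k}$), use that $x^{2}+1$ always divides $x^{4k}-x^{2k+1}-x^{2k-1}-1$ so that the cohomological characteristic polynomial is the corresponding quotient (resp.\ is $x^{2k-1}-x^{k}-x^{k-1}-1$), verify that the relevant admissible multiple satisfies the sign condition, and that the dilatation equals the largest positive root; together with the previous paragraph this proves both theorems. The step I expect to be the main obstacle is the key lower bound: making the ``unique dominant root forces $\lambda\ge\rho_{d}$'' argument precise and uniform in $d$ and across all six choices of quadratic multiplier together with $q=1$. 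For Theorem~\ref{thm:N} there is a further difficulty, namely that the sign condition constrains the factor $p$ rather than $P=p\,p^{\sigma}$ itself, so the folding and the root estimate must keep track of this factorization; moreover one must feed in that $P$ genuinely arises from a pseudo-Anosov map on $S_{2k-1}$ commuting with $\sigma$ — equivalently, the constraints on the admissible singularity data of an orientable foliation on $N_{2k}$ — in order to rule out the low-house reciprocal polynomials that the sign condition by itself would permit.
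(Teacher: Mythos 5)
There is a genuine gap, and it sits exactly where you predicted the main obstacle would be, plus one step earlier. Your reduction asserts that $P(x)=p(x)q(x)$ is anti-reciprocal for every admissible multiplier $q$. That is false for $q=(x\pm1)^2$ and $q=x^2\pm x+1$: by Proposition~\ref{pro:LS2}(3), $p$ satisfies $p(x)=(-1)^g x^{2g}p(-x^{-1})$, i.e.\ it is anti-reciprocal with respect to $x\mapsto -x^{-1}$, whereas these $q$ are palindromic with respect to $x\mapsto x^{-1}$ and are not fixed by $x\mapsto -x^{-1}$ (indeed $x^2q(-x^{-1})$ sends $(x+1)^2$ to $(x-1)^2$ and $x^2+x+1$ to $x^2-x+1$). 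So in general $P$ is neither reciprocal nor anti-reciprocal over $\Z$; the only surviving symmetry is reciprocity of the coefficients mod $2$, which is precisely what Lemma~\ref{lemma:mod2}(3) records. Consequently your ``key lower bound,'' formulated for exactly (anti-)reciprocal polynomials and to be proved by the folding substitutions $x\mapsto x\pm x^{-1}$, does not even apply to most of the class covered by the theorem, and the folding has no exact symmetry to act on. (Your sharpness paragraph, by contrast, is fine and matches the paper: the realizing example comes from \cite[Proposition 3.3]{LiechtiStrenner18}, with $x^2+1$ dividing $x^{4k}-x^{2k+1}-x^{2k-1}-1$.)

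Second, that key lower bound is the entire analytic content of the theorem and is left as a plan rather than proved: one must exclude polynomials such as $x^{4k}-a_{2k}x^{2k}-1$ (whose house is strictly smaller but whose maximal-modulus root is not unique), show that among the remaining sign patterns the minimum is attained by the centered pair, and compare the degrees $4k-2$ and $4k$ across all multipliers. The paper does all of this by an elementary mechanism that needs no exact self-duality: nonpositivity of the non-leading coefficients makes the transposed companion matrix nonnegative; mod-$2$ reciprocity together with the primitivity/gcd condition (Lemma~\ref{lemma:mod2}, via Lemma~\ref{lemma:DM}) forces a symmetric pair of nonzero entries away from the center; entrywise domination plus Perron--Frobenius (Lemma~\ref{lemma:Perron}) reduces the estimate to the polynomials $x^{2n}-x^{2n-i}-x^i-1$; and a short calculus computation (Lemma~\ref{lemma:minispec}) shows the centered choice $i=n-1$ minimizes, after which the largest roots in degrees $4k-2$ and $4k$ are compared. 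If you want to salvage your route, replace exact (anti-)reciprocity by the mod-$2$ statement and argue with nonnegative companion matrices (or otherwise prove the lower bound without folding); as written, the central lemma is both unproved and inapplicable to the multipliers $(x\pm1)^2$ and $x^2\pm x+1$.
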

\section{Perron-Frobenius matrix} \label{section:perron}

\begin{definition}
Let $M=\begin{pmatrix}m_{ij}\end{pmatrix}$ and $N= \begin{pmatrix}n_{ij}\end{pmatrix}$ be two nonnegative $d \times d$ matrices. We say $M > N$ 
if  $m_{ij} \geq n_{ij}$ for all $i,\, j \in \{1,2,\ldots,d\}$ and strict inequality holds for at least one entry.
\end{definition}

\begin{lemma} [Perron-Frobenius] \label{lemma:Perron}
Let $T$ and $L$ be two Perron-Frobenius matrices such that $T > L$, then the spectral radius $\lambda_T$ of $T$ is strictly bigger than the spectral radius 
$\lambda_L$ of $L$.
\end{lemma}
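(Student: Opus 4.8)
The plan is to invoke the Perron--Frobenius theorem itself, which guarantees that each of the primitive nonnegative matrices $T$ and $L$ realizes its spectral radius as a genuine eigenvalue, with strictly positive right and left eigenvectors. The strict inequality in the conclusion will then come entirely from pairing a positive left eigenvector of the larger matrix $T$ against a positive right eigenvector of the smaller matrix $L$; merely passing to a limit of norms would only give the weak inequality $\lambda_T \geq \lambda_L$, so routing through the eigenvectors is essential.

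Concretely, first I would fix a strictly positive row vector $w$ with $wT = \lambda_T w$ and a strictly positive column vector $v$ with $Lv = \lambda_L v$; the existence of both is exactly the Perron--Frobenius theorem applied to $T$ and to $L$ respectively. Next I would evaluate the scalar $wTv$ in two ways. On one hand $wTv = \lambda_T (wv)$. On the other hand, writing $T = L + (T-L)$, we get $wTv = wLv + w(T-L)v = \lambda_L (wv) + w(T-L)v$.

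The key step is then to check that $w(T-L)v > 0$ strictly. Indeed $T-L$ is a nonnegative matrix with at least one strictly positive entry, say in position $(i_0,j_0)$, and since every coordinate of $w$ and of $v$ is strictly positive, the expansion $w(T-L)v = \sum_{i,j} w_i (T-L)_{ij} v_j$ contains the strictly positive summand $w_{i_0}(T-L)_{i_0 j_0} v_{j_0}$ while every other summand is $\geq 0$; hence the total is $> 0$. Combining the two evaluations gives $\lambda_T (wv) = \lambda_L (wv) + w(T-L)v > \lambda_L (wv)$, and since $wv = \sum_i w_i v_i > 0$ we may divide through to conclude $\lambda_T > \lambda_L$.

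The only real subtlety — and the one point where one must be careful — is the appeal to Perron--Frobenius for the strict positivity of \emph{both} eigenvectors: this is precisely where the hypothesis that $T$ and $L$ are Perron--Frobenius matrices (primitive, equivalently some power is entrywise positive), rather than merely nonnegative, is used, since a reducible nonnegative matrix may fail to have a strictly positive eigenvector and the strict inequality can then genuinely fail. Beyond that, no further estimates are required.
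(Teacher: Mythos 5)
Your proof is correct and follows essentially the same route as the paper: both pair a strictly positive left eigenvector of $T$ with a strictly positive right eigenvector of $L$ and compare $wTv = \lambda_T(wv)$ against $\lambda_L(wv)$, the strict gap coming from positivity of the eigenvectors together with $T-L$ being nonnegative and nonzero. Your decomposition $T = L + (T-L)$ simply spells out in more detail the paper's one-line estimate $tTl > \lambda_L tl$.
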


\begin{proof}
Let $t > 0$ be a left eigenvector of $T$ corresponding to $\lambda_T$ and let $l > 0$ be a right eigenvector of $L$ corresponding to $\lambda_L$. Then
$$Tl > Ll = \lambda_Ll$$
and
$$\lambda_Ttl=tTl>\lambda_Ltl.$$
Therfore $\lambda_T>\lambda_L$.
\end{proof}

\begin{proposition}
The transpose of the Frobenius companion matrix of $$x^{2k-1}-a_{2k-2}x^{2k-2} \cdots -a_{1}x-1$$
 \noindent is
$$\begin{bmatrix}
0 & 1 & 0 & \hdots & 0 \\
0 & 0 & 1 & \hdots & 0 \\
\vdots & \vdots & \vdots & \ddots & \vdots \\
0& 0 & 0 & \hdots 1 \\
1 & a_1 & a_2 & \hdots & a_{2k-2}
\end{bmatrix},$$
and the transpose of the Frobenius companion matrix of $$x^{4k}-a_{4k-1}x^{4k-1} \cdots -a_{1}x-1$$
\noindent is
$$\begin{bmatrix}
0 & 1 & 0 & \hdots & 0 \\
0 & 0 & 1 & \hdots & 0 \\
\vdots & \vdots & \vdots & \ddots & \vdots \\
0& 0 & 0 & \hdots 1 \\
1 & a_1 & a_2 & \hdots & a_{4k-1}
\end{bmatrix}.$$
\end{proposition}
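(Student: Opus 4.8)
The plan is to unwind the definition of the Frobenius companion matrix and transpose; the statement carries essentially no content beyond fixing notation for the later Perron--Frobenius comparisons. Recall that for a monic polynomial
$$q(x) = x^n - c_{n-1}x^{n-1} - \cdots - c_1 x - c_0,$$
the Frobenius companion matrix is the $n \times n$ matrix with $1$'s on the subdiagonal, the coefficients $c_0, c_1, \ldots, c_{n-1}$ filling the last column from top to bottom, and $0$'s everywhere else; it is a standard fact that its characteristic polynomial is $q(x)$.

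First I would specialize to $n = 2k-1$, $c_0 = 1$, and $c_i = a_i$ for $1 \le i \le 2k-2$. This places a $1$ in the upper-right corner, the entries $a_1, \ldots, a_{2k-2}$ down the rest of the last column, and $1$'s on the subdiagonal. Transposing sends the subdiagonal to the superdiagonal and the last column to the last row, which is precisely the first displayed matrix. The second assertion is the identical computation with $n = 4k$ and $c_i = a_i$ for $1 \le i \le 4k-1$.

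The only point requiring care is the choice of convention: companion matrices appear in the literature with the coefficients placed in the first row, the first column, the last row, or the last column, sometimes with sign changes, and only the ``last column'' convention transposes to the displayed form. To keep the statement self-contained I would therefore verify directly that the displayed $(2k-1)\times(2k-1)$ matrix $A$ has characteristic polynomial $x^{2k-1}-a_{2k-2}x^{2k-2}-\cdots-a_1 x-1$. Expanding $\det(xI - A)$ along the first column leaves exactly two nonzero terms: the entry $x$ in position $(1,1)$, whose minor is a matrix of the same shape one size smaller (with corner entry $a_1$ and remaining last-row entries $a_2,\ldots,a_{2k-2}$), and the entry $-1$ in position $(2k-1,1)$, whose minor is triangular with $-1$'s on the diagonal. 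Tracking the signs, the second term contributes $-1$, and an induction on $n$ — stated for the general such matrix whose last row is $(b_0, b_1, \ldots, b_{n-1})$, with hypothesis that its characteristic polynomial is $x^n - b_{n-1}x^{n-1} - \cdots - b_1 x - b_0$ — handles the first term, yielding $x^{2k-1}-a_{2k-2}x^{2k-2}-\cdots-a_1 x-1$. Since transposition preserves the characteristic polynomial and keeps the ``shift plus coefficient column'' shape, this confirms that $A$ is the transpose of the companion matrix, and the degree-$4k$ case is word for word the same. There is no genuine obstacle beyond this sign bookkeeping.
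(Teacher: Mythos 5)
Your proof is correct, and in fact the paper offers no proof of this proposition at all: it is stated as a convention-fixing fact (the matrix with $1$'s on the superdiagonal and the coefficients along the bottom row is the transpose of the companion matrix in the last-column convention, and has the displayed polynomial as characteristic polynomial), which is exactly what your first paragraph establishes and what the later arguments actually use. Your added cofactor-expansion induction is a correct, if redundant, sanity check; just note that by itself matching the characteristic polynomial would not identify $A$ as the transpose of the companion matrix (many matrices share a characteristic polynomial), so the identification really rests on your direct specialize-and-transpose argument, which is fine. No gap.
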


\begin{proposition} \cite[Proposition 4.1]{LiechtiStrenner18} \label{pro:LS1}
Let $\psi : N_g \rightarrow N_g$ be a pseudo-Anosov map with a transversely orientable invariant foliation on the closed nonorientable surface $N_g$ of genus $g$. Then its dilatation $\lambda$ is a root of a (not necessarily irreducible) polynomial $p(x) \in \Z[x]$ with the following properties:
\begin{enumerate}
\item $\textnormal{deg}(p)=g-1$
\item $p(x)$ is monic and its constant coefficient is $\pm1$
\item The absolute values of the roots of $p(x)$ other than $\lambda$ lie in the open interval 
$(\lambda^{-1},\lambda)$. In particular, $p(x)$ is not reciprocal or anti-reciprocal.
\item $p(x)$ is reciprocal mod $2$.
\end{enumerate}
\end{proposition}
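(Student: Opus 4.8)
The plan is to establish the four properties in turn: (1) and (2) from the topology of $N_{g}$, (3) from the ``homological Perron'' behaviour of pseudo-Anosov maps with orientable foliations together with a comparison to the orientation double cover, and (4) by reducing everything mod~$2$, where the orientation twisting disappears. Replacing $\psi$ by $\psi^{-1}$ if necessary (this changes neither the class nor the dilatation), I would first arrange that the transversely orientable invariant foliation is the \emph{stable} one $\mathcal{F}^{s}$, so that the unstable foliation $\mathcal{F}^{u}$ is leafwise orientable; let $p(x)$ be the characteristic polynomial of $\psi^{*}$ acting on $H^{1}(N_{g};\Z)/\mathrm{tors}$. Since $H_{1}(N_{g};\Z)\cong\Z^{g-1}\oplus\Z/2\Z$ with canonical torsion subgroup, $\psi_{*}$ fixes the torsion and induces an element of $GL_{g-1}(\Z)$ on the free quotient, so dually $p$ is monic of degree $g-1$ with constant coefficient $\pm1$; this is (1) and (2).

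For (3): first one shows $\lambda$ is a root of $p$. The co-oriented transverse measure of $\mathcal{F}^{s}$ is a closed de~Rham $1$-form whose class in $H^{1}(N_{g};\mathbb{R})$ is nonzero (it has nonzero integral over a closed transversal) and, tracking the stretching convention $h(\mathcal{F}^{s})=\lambda^{-1}\mathcal{F}^{s}$, is a $\psi^{*}$-eigenvector with eigenvalue $\lambda$. Now pass to the orientation double cover $\pi\colon S_{g-1}\to N_{g}$, with deck involution $\iota$ and a lift $\tilde\psi$ taken orientation-preserving; $\tilde\psi$ is pseudo-Anosov of dilatation $\lambda$ with orientable invariant foliations $\pi^{-1}\mathcal{F}^{u},\pi^{-1}\mathcal{F}^{s}$ and commutes with $\iota$. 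Over $\mathbb{R}$ one has a $\tilde\psi^{*}$-invariant splitting $H^{1}(S_{g-1};\mathbb{R})=V^{+}\oplus V^{-}$ into the $(\pm1)$-eigenspaces of $\iota^{*}$, each of dimension $g-1$, with $\pi^{*}$ an isomorphism $H^{1}(N_{g};\mathbb{R})\xrightarrow{\sim}V^{+}$ intertwining $\psi^{*}$ and $\tilde\psi^{*}$; hence the characteristic polynomial $\tilde p$ of $\tilde\psi^{*}$ factors as $\tilde p=pq$ with $q$ its characteristic polynomial on $V^{-}$. The dynamical input I would invoke is that, $\tilde\psi$ being pseudo-Anosov on the \emph{orientable} $S_{g-1}$ with orientable foliations, $\lambda$ is a \emph{simple} eigenvalue of $\tilde\psi^{*}$ equal to its spectral radius and the \emph{only} eigenvalue of modulus $\lambda$ (the eigenvalues of $\tilde\psi^{*}$ occur among those of the Perron--Frobenius transition matrix of an oriented invariant train track), while $\tilde\psi^{*}$-invariance and nondegeneracy of the intersection form make $\tilde p$ reciprocal, so $\lambda^{-1}$ is likewise the unique, simple, eigenvalue of modulus $\lambda^{-1}$ and every other eigenvalue has modulus strictly between $\lambda^{-1}$ and $\lambda$.

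It remains, for (3), to rule out $\lambda^{-1}$ as a root of $p$. The co-orientation of $\pi^{-1}\mathcal{F}^{s}$ on $S_{g-1}$ descends to the given one on $N_{g}$, so the $\lambda$-eigenline of $\tilde\psi^{*}$ lies in $V^{+}$; the companion $\pi^{-1}\mathcal{F}^{u}$ is co-orientable upstairs but its co-orientation does \emph{not} descend --- downstairs $\mathcal{F}^{u}$ is only leafwise orientable, since $w_{1}(N_{g})\neq0$ --- so $\iota^{*}$ acts by $-1$ on its class and the $\lambda^{-1}$-eigenline lies in $V^{-}$. Hence $\lambda\mid p$ and $\lambda^{-1}\mid q$, so $\lambda^{-1}\nmid p$. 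Any root $z$ of $p$ is a root of $\tilde p$, so $|z|\leq\lambda$ with equality only at $z=\lambda$, while $|z|\leq\lambda^{-1}$ would force $1/z$ --- a root of the reciprocal $\tilde p$ --- to have modulus $\geq\lambda$, hence $z=\lambda^{-1}$, which is excluded; this proves (3), and having $\lambda$ but not $\lambda^{-1}$ among its roots, $p$ is neither reciprocal nor anti-reciprocal. For (4): the $\Z/2\Z$-Gysin sequence of $\pi$ is $\psi^{*}$-equivariant ($w=w_{1}(N_{g})$ is canonical, hence $\psi^{*}$-fixed) and presents $H^{1}(S_{g-1};\Z/2\Z)$ as an extension of $w^{\perp}\subset H^{1}(N_{g};\Z/2\Z)$ by $H^{1}(N_{g};\Z/2\Z)/\langle w\rangle$; since $w$ is fixed and the mod-$2$ intersection form is $\psi^{*}$-invariant, each of these two modules has characteristic polynomial $p\bmod 2$, so $\tilde p\equiv(p\bmod 2)^{2}$, and with $\tilde p=pq$ this forces $q\equiv p\pmod 2$. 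Finally, $\tilde p$ being reciprocal, $p^{\ast}q^{\ast}=\pm pq$ where $(\cdot)^{\ast}$ denotes the reversal; reducing mod~$2$ and using $q\equiv p$ gives $(p^{\ast})^{2}\equiv p^{2}$, hence $p^{\ast}\equiv p\pmod 2$ since $(\Z/2\Z)[x]$ is a domain --- that is, $p$ is reciprocal mod~$2$.

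The main obstacle is the dynamical assertion on the cover --- that $\lambda$ is a \emph{simple, strictly dominant} eigenvalue of $\tilde\psi^{*}$ --- which is what upgrades ``$\lambda$ is among the roots of $p$'' to the full Perron-type statement (3); the argument via an oriented invariant train track and its Perron--Frobenius transition matrix is standard but needs to be quoted carefully, and the passage from the transition matrix through $H_{1}(\tau)$ down to $H_{1}(S_{g-1})$ checked. A secondary difficulty is keeping the orientation and stretching conventions consistent so that it is genuinely $\lambda$, not $\lambda^{-1}$, that lands in $V^{+}=H^{1}(N_{g};\mathbb{R})$, and, for (4), being careful with the extra $\Z/2\Z$-summand of $H_{1}(N_{g};\Z)$ and with the $\psi^{*}$-fixedness of $w_{1}$.
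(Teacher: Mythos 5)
A preliminary remark on the comparison you ask for: the paper offers no proof of this proposition at all --- it is quoted verbatim from \cite[Proposition 4.1]{LiechtiStrenner18} --- so your attempt can only be measured against the argument expected there, which is indeed the double-cover strategy you follow (lift to the orientation double cover $S_{g-1}$, split $H^1(S_{g-1};\mathbb{R})$ into the $\pm1$-eigenspaces of the deck involution, invoke the Perron-type statement for pseudo-Anosov maps with orientable foliations on orientable surfaces, and compare mod $2$). Your handling of (1), (2) and (4) is essentially correct; for (4) you could in fact stay downstairs: $H^1(N_g;\Z/2)$ carries the nondegenerate mod-$2$ intersection pairing preserved by $\psi^*$, its characteristic polynomial is $(x+1)\cdot(p\bmod 2)$ because the torsion class of $H_1(N_g;\Z)$ contributes a fixed vector, and reciprocity of this degree-$g$ polynomial forces $p$ to be reciprocal mod $2$, with no Gysin sequence needed.

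There is, however, a genuine gap in your argument for (3): you assume throughout that $\psi$ preserves the chosen co-orientation of $\mathcal{F}^s$, and hence that the orientation-preserving lift $\tilde\psi$ preserves the co-orientations of both lifted foliations. Nothing in the hypotheses guarantees this: $\psi$ maps the co-oriented foliation to itself but may reverse its co-orientation, in which case $\psi^*[\omega_s]=-\lambda[\omega_s]$, the eigenvalues realized on $[\tilde\omega_s]\in V^{+}$ and $[\tilde\omega_u]\in V^{-}$ are $-\lambda$ and $-\lambda^{-1}$ (the signs are forced to agree because $\tilde\psi$ is orientation-preserving and fixes $[\tilde\omega_u]\cup[\tilde\omega_s]\neq 0$ in $H^2(S_{g-1};\mathbb{R})$), and your $p$ --- which you define as the characteristic polynomial of $\psi^*$ --- need not have $\lambda$ as a root at all; moreover the ``dynamical input'' you quote (simplicity and strict dominance of $\lambda$ for the homological action of $\tilde\psi$) is stated in the literature only under the hypothesis that $\tilde\psi$ preserves the orientations of its invariant foliations, so in the reversing case it applies to $-\tilde\psi^{*}$, not to $\tilde\psi^{*}$. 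The gap is fixable precisely because the proposition only asserts the existence of some polynomial: in the reversing case replace $p(x)$ by $(-1)^{g-1}p(-x)$, which is still monic of degree $g-1$ with constant term $\pm1$, now has $\lambda$ as a root, has roots of the same absolute values, and is unchanged mod $2$, so all four properties survive. But your write-up neither notices the dichotomy nor performs this substitution, and since the whole content of (3) is to place $\lambda$ itself (not $-\lambda$) among the roots and to exclude $\lambda^{-1}$, the case distinction must be made explicitly; the same caveat should accompany your use of the train-track Perron--Frobenius fact on the cover.
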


\begin{proposition} \cite[Proposition 4.3]{LiechtiStrenner18} \label{pro:LS2}
Let $\psi : S_g \rightarrow S_g$ be an orientation-reversing pseudo-Anosov map with  transversely orientable invariant foliations. Then its dilatation 
$\lambda$ is a root of a (not necessarily irreducible) polynomial $p(x) \in \Z[x]$ with the following properties:
\begin{enumerate}
\item $\textnormal{deg}(p)=2g$
\item $p(x)$ is monic and its constant coefficient is $(-1)^g$
\item $p(x)=(-1)^g x^{2g} p(-x^{-1})$
\item The absolute values of the roots of $p(x)$ other than $\lambda$ and $-\lambda^{-1}$ lie in the open interval $(\lambda^{-1},\lambda)$.
\end{enumerate}
\end{proposition}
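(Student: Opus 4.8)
Throughout write $A=\psi^{*}$ for the induced map on $H^{1}(S_g;\mathbb{Z})\cong\mathbb{Z}^{2g}$ and take $p(x)=\det(xI-A)$; then $(1)$ and monicity are immediate, and the plan is to deduce $(2)$ and $(3)$ from the single structural fact that $A$ is \emph{anti-symplectic}. Concretely: since $\psi$ reverses orientation it acts on $H^{2}(S_g;\mathbb{Z})\cong\mathbb{Z}$ by $-1$, so the unimodular alternating intersection form $\Omega$ satisfies $A^{\top}\Omega A=-\Omega$. Taking determinants gives $(\det A)^{2}=1$; to fix the sign I would conjugate $A$ into a symplectic basis (turning it into an anti-symplectic matrix for the standard form $J$) and use that the anti-symplectic coset of $\mathrm{Sp}(2g,\mathbb{R})$ is connected, forcing $\det A$ to equal its value on the model $\operatorname{diag}(I_g,-I_g)$, namely $(-1)^{g}$; since $p(0)=\det(-A)=\det A$, this is $(2)$. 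For $(3)$, rewriting $A^{\top}\Omega A=-\Omega$ as $\Omega^{-1}A^{\top}\Omega=-A^{-1}$ shows $A^{\top}$, and hence $A$, is conjugate to $-A^{-1}$, so $p(x)=\det(xI+A^{-1})$, and a one-line determinant manipulation---$xI+A^{-1}=A^{-1}(xA+I)$ together with $\det(xA+I)=x^{2g}p(-1/x)$ and $\det A=(-1)^{g}$---yields $p(x)=(-1)^{g}x^{2g}p(-1/x)$; in particular the roots come in pairs $\{\mu,-\mu^{-1}\}$.

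For $(4)$ the plan is to bring in the pseudo-Anosov dynamics. Transverse orientability of $\mathcal{F}^{u}$ and $\mathcal{F}^{s}$ lets their transverse measures be paired with $1$-cycles, producing classes $\omega_u,\omega_s\in H^{1}(S_g;\mathbb{R})$ with $A\omega_u=\pm\lambda\,\omega_u$ and $A\omega_s=\pm\lambda^{-1}\omega_s$; since $\psi$ reverses the orientation of the surface it reverses exactly one of the two transverse coorientations, so these two boundary eigenvalues form a $\{\mu,-\mu^{-1}\}$ pair, which with the normalization in the statement is $\{\lambda,-\lambda^{-1}\}$. For the bound $|\mu|\le\lambda$ on \emph{every} root I would invoke the standard fact that, when the invariant foliations are orientable, the dilatation equals the homological spectral radius---realizing $A$ as a sub-quotient of the Perron--Frobenius transition matrix of an invariant train track, whose Perron eigenvalue is $\lambda$---and then combine it with the pairing from $(3)$ to get $|\mu|\ge\lambda^{-1}$ as well. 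For the strict part I would use unique ergodicity of the invariant foliations: it makes $\lambda$ a simple eigenvalue and excludes any other root of modulus exactly $\lambda$ (such a root would yield a second, independent $\psi$-invariant transverse measure on $\mathcal{F}^{u}$), and via $(3)$ the same conclusion transfers to $-\lambda^{-1}$; alternatively one can obtain these last points by applying the orientation-preserving version of the statement to $\psi^{2}$.

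I expect $(1)$--$(3)$ to be routine and $(4)$ to be the main obstacle---and, within $(4)$, the strict inequality rather than the weak one. The inequality $|\mu|\le\lambda$ and the simplicity of $\lambda$ are comparatively soft inputs (train tracks, unique ergodicity), but ruling out further roots of modulus exactly $\lambda$ cannot be read off from the matrix $A$ alone, since $p$ need not be irreducible; it must come from the dynamics, and making that step precise---either directly from unique ergodicity or by passing to the orientation-preserving square $\psi^{2}$---is where the effort concentrates.
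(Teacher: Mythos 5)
There is nothing in the paper to compare you against: Proposition~\ref{pro:LS2} is quoted verbatim from \cite[Proposition 4.3]{LiechtiStrenner18} and used as a black box, with no proof given here. Judged on its own, your treatment of (1)--(3) is correct and standard: $\psi$ acts by $-1$ on $H^2$, so $A=\psi^{*}$ is anti-symplectic, $\det A=(-1)^{g}$ (connectedness of the coset is not even needed --- every anti-symplectic matrix is $\operatorname{diag}(I_g,-I_g)$ times a symplectic one, and symplectic matrices have determinant $1$), and $A\sim-A^{-1}$ gives the skew-reciprocity (3). One small omission: you fixed $p(x)=\det(xI-A)$ at the outset, but when $\psi$ reverses the coorientation of $\mathcal{F}^{u}$ the boundary eigenvalues of $A$ are $-\lambda$ and $\lambda^{-1}$, and $\lambda$ itself need not be a root of $\det(xI-A)$; the ``normalization'' you allude to means replacing $p(x)$ by $p(-x)$, and one should check (as is true) that (2) and (3) survive this substitution.

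The genuine gap is in the strict part of (4). The mechanism you propose --- that a further root of modulus exactly $\lambda$ ``would yield a second, independent $\psi$-invariant transverse measure on $\mathcal{F}^{u}$,'' contradicting unique ergodicity --- does not work as stated. A cohomology eigenvector with eigenvalue $\mu$, $|\mu|=\lambda$, $\mu\neq\lambda$ (e.g.\ $\mu=-\lambda$, or $\lambda$ times a root of unity) is not representable by a transverse measure on $\mathcal{F}^{u}$, so no second measure is produced; and unique ergodicity alone does not exclude peripheral spectrum: an irreducible but imprimitive nonnegative matrix has a unique positive eigenvector (``unique ergodicity'') and yet carries other eigenvalues of modulus equal to its spectral radius. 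What actually removes the peripheral eigenvalues is mixing/primitivity: using the transverse orientations to avoid sign cancellations, the homological action is realized as a quotient (or restriction) of the action of a primitive transition matrix of a Markov partition or invariant train track with Perron root $\lambda$; primitivity gives simplicity of $\lambda$ and strict domination, and the pairing $\mu\leftrightarrow-\mu^{-1}$ from (3) then transfers strictness to the inner radius $\lambda^{-1}$. Your fallback via $\psi^{2}$ is closer to a complete argument, but by itself it only yields $\mu^{2}=\lambda^{2}$, leaving $\mu=-\lambda$ alive unless the orientation-preserving statement you invoke also asserts simplicity of the top eigenvalue; that point must be made explicit for the argument to close.
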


\begin{lemma} \label{lemma:DM}
Let $A$ be the adjacency matrix for a graph $G$ with $m$ vertices.
Then $A$ is primitive if and only if G is strongly connected and the gcd of the lengths of the loops in $G(A)$ is one.
\end{lemma}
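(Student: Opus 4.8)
The plan is to identify the matrix-theoretic notion \emph{primitive} with the graph-theoretic conditions through the standard dictionary between powers of $A$ and walks in the digraph $G = G(A)$ (the digraph on $\{1,\dots,m\}$ with an arc $i\to j$ whenever $a_{ij}>0$). Recall that $A$ being primitive means $A^n>0$ for some $n\ge 1$, and that $(A^n)_{ij}>0$ if and only if there is a walk of length exactly $n$ from $i$ to $j$ in $G$; I would take both facts as the backbone of the argument.

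For the forward implication, assume $A^n>0$. Then between any ordered pair of vertices there is a walk of length $n$, so $G$ is strongly connected. Moreover $A^n>0$ forces every row of $A$ to be nonzero (otherwise the corresponding row of $A^n = A\,A^{n-1}$ would vanish), so from $A^{n+1}=A\,A^n$ we get $(A^{n+1})_{ij}=\sum_k a_{ik}(A^n)_{kj}>0$ for all $i,j$, i.e. $A^{n+1}>0$ as well. Fixing a vertex $v$, there are thus closed walks at $v$ of lengths $n$ and $n+1$, and hence the gcd of the lengths of all closed walks (loops) in $G(A)$ divides $\gcd(n,n+1)=1$.

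The substantive direction is the converse, and I would carry it out in three steps. First, reduce the gcd hypothesis to a single vertex: by strong connectivity, given any closed walk of length $\ell$ based at a vertex $u$ and walks $v\to u$, $u\to v$ of lengths $a,b$, concatenation yields closed walks at $v$ of lengths $a+b$ and $a+\ell+b$, so the gcd $d_v$ of the closed-walk lengths at $v$ divides $\ell$; since $\ell$ and $u$ were arbitrary, $d_v=1$. Second, note that the set $S_v\subseteq\mathbb{Z}_{\ge 1}$ of lengths of closed walks at $v$ is closed under addition (concatenate two closed walks at $v$) and has gcd $1$, so by the classical numerical-semigroup fact that such a set contains every sufficiently large integer, there is $N_v$ with $\{n:n\ge N_v\}\subseteq S_v$. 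Third, for an ordered pair $(i,j)$ choose walks $i\to v$ of length $\alpha_i$ and $v\to j$ of length $\beta_j$; then for every $n\ge N_v+\alpha_i+\beta_j$ one splices a walk $i\to v$, a closed walk at $v$ of length $n-\alpha_i-\beta_j$, and a walk $v\to j$, showing $(A^n)_{ij}>0$. With $N=N_v+\max_i\alpha_i+\max_j\beta_j$ this gives $A^N>0$, so $A$ is primitive.

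I expect the only genuine obstacle to be the number-theoretic input in the converse — the statement that an additively closed subset of $\mathbb{N}$ with gcd one omits only finitely many positive integers (equivalently a Frobenius/Chicken-McNugget-type bound) — together with the bookkeeping that turns ``closed walks of all large lengths at one vertex'' into simultaneous positivity of all entries of a single power $A^N$. Everything else is a routine application of the walk/matrix-power correspondence and strong connectivity; alternatively, one could simply cite this as the classical characterization of primitive nonnegative matrices.
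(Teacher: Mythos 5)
Your proof is correct, but it is worth noting that the paper does not actually prove this lemma at all: its ``proof'' consists of the single line ``See Dulmage--Mendelsohn, Chapter 6, Section 1, Remarks 6, 8,'' i.e.\ the authors treat this as the classical characterization of primitive nonnegative matrices and cite the literature --- exactly the fallback you mention in your last sentence. What you supply instead is the standard self-contained argument: the walk/matrix-power dictionary $(A^n)_{ij}>0 \iff$ there is a walk of length $n$ from $i$ to $j$, the forward direction via closed walks of two consecutive lengths at a fixed vertex, and the converse via the numerical-semigroup fact that an additively closed set of positive integers with gcd $1$ contains all sufficiently large integers, followed by the splicing bookkeeping that produces a single uniform exponent $N$ with $A^N>0$. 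The citation buys brevity; your version buys transparency and makes explicit the only nontrivial ingredient (the Frobenius/Schur-type bound). Two small points you may wish to tidy: the lemma speaks of the gcd of the lengths of \emph{loops} (cycles) while you work with closed walks, so you should remark that these two gcds coincide because every closed walk decomposes into cycles (this is what lets your $d_v$ divide the hypothesized gcd, and conversely makes your forward direction conclude about cycle lengths); and the statement implicitly assumes the graph actually has a loop through some vertex (e.g.\ it fails vacuously for a single vertex with no edge), which strong connectivity with $m\ge 2$ guarantees, so $S_v\neq\emptyset$ in your second step.
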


\begin{proof}
See~\cite[Chapter 6, Section 1, Remarks, 6, 8]{DulmageMendelsohn67}.
\end{proof}

\begin{lemma} \label{lemma:mod2}
\begin{enumerate}
\item Let $\psi : N_{2k} \rightarrow N_{2k}$ be a pseudo-Anosov map with a transversely orientable invariant foliation on the closed nonorientable surface $N_{2k}$ of genus $2k$. Let $x^{2k-1}-a_{2k-2}x^{2k-2} \cdots -a_{1}x-1$ be the characteristic polynomial of the action of $\psi$ induced on first cohomology of $N_{2k}$, 
then $a_{i} \equiv a_{2k-1-i}$ mod $2$ and at least for one $a_{i}$ with $gcd(2k-1,2k-1-i)=1$, $a_{i} \neq 0$ for $1 \leq i \leq k-1$.
\item Let $\psi : S_{2k-1} \rightarrow S_{2k-1}$ be an orientation-reversing pseudo-Anosov map with  transversely orientable invariant foliations.  If $p(x)=x^{4k-2}-a_{4k-3}x^{4k-3} \cdots -a_{1}x-1$ be the characteristic polynomial of the action of $\psi$ induced on first cohomology of $S_{2k-1}$, 
then $a_{i} \equiv a_{4 k-2-i}$ mod $2$ and at least for one $a_{i}$ with $gcd(4k-2,4k-2-i)=1$, $a_{i} \neq 0$ for $1 \leq i \leq 2 k-1$.
\item Let $\psi : S_{2k-1} \rightarrow S_{2k-1}$ be an orientation-reversing pseudo-Anosov map with  transversely orientable invariant foliations. Let $p(x)$ be the characteristic polynomial of the action of $\psi$ induced on first cohomology of $S_{2k-1}$. If $ x^{4k}-a_{4k-1}x^{4k-1} \cdots -a_{1}x-1$ is one of $p(x) (x \pm 1)^2$, $p(x) (x^2 \pm 1)$, or $p(x) (x^2 \pm x + 1)$, then $a_{i} \equiv a_{4 k-i}$ mod $2$ and at least for one $a_{i}$ with $gcd(4k,4k-i)=1$, $a_{i} \neq 0$ for $1 \leq i \leq 2 k-1$.
\end{enumerate}
\end{lemma}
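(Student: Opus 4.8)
The plan is to prove each of the three statements by combining two independent observations: the parity congruence comes from a reciprocity‑modulo‑$2$ property of the relevant polynomial, while the non‑vanishing of some coefficient at an index coprime to the degree comes from the primitivity of the associated companion matrix.

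For the congruences, note first that in part~(1) the polynomial $x^{2k-1}-a_{2k-2}x^{2k-2}-\dots-a_1x-1$ has degree $2k-1=g-1$, so it is precisely the polynomial $p(x)$ furnished by Proposition~\ref{pro:LS1}; part~(4) of that proposition says $p$ is reciprocal mod~$2$, i.e. $p(x)\equiv x^{2k-1}p(1/x)\pmod2$, and equating the coefficient of $x^i$ on both sides gives $a_i\equiv a_{2k-1-i}\pmod2$. In part~(2) the degree is $4k-2=2g$, so the characteristic polynomial is the polynomial of Proposition~\ref{pro:LS2}, whose part~(3) reads $p(x)=(-1)^gx^{2g}p(-x^{-1})=-x^{4k-2}p(-x^{-1})$; equating coefficients yields $a_i=(-1)^{i+1}a_{4k-2-i}$, in particular $a_i\equiv a_{4k-2-i}\pmod2$. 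For part~(3) write $q(x)=p(x)r(x)$ with $r$ one of $(x\pm1)^2$, $x^2\pm1$, $x^2\pm x+1$; a one‑line check in each of the six cases shows $r(x)\equiv x^2r(-x^{-1})\pmod2$. Combining this with $x^{4k-2}p(-x^{-1})=-p(x)$ gives $x^{4k}q(-x^{-1})=x^{4k-2}p(-x^{-1})\cdot x^2r(-x^{-1})\equiv(-p(x))\cdot r(x)=-q(x)\equiv q(x)\pmod2$, so $q$ is reciprocal mod~$2$, and equating coefficients gives $a_i\equiv a_{4k-i}\pmod2$.

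For the non‑vanishing statement, let $n$ denote $2k-1$, $4k-2$, or $4k$ in the three cases, and consider the monic integer polynomial $x^n-a_{n-1}x^{n-1}-\dots-a_1x-1$; under the hypotheses of Theorems~\ref{thm:N} and~\ref{thm:S} every $a_i$ is nonnegative. Its transpose companion matrix $M$ (displayed above) is then a nonnegative integer matrix, namely the adjacency matrix of the digraph $G$ on vertices $\{1,\dots,n\}$ with arcs $j\to j+1$ for $1\le j\le n-1$, an arc $n\to1$, and an arc $n\to i+1$ for every $i$ with $a_i\ne0$. Since the constant coefficient equals $-1$, the cycle $1\to2\to\dots\to n\to1$ is present, so $G$ is strongly connected and $M$ is irreducible. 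The dilatation $\lambda>1$ is a root of the polynomial, hence an eigenvalue of $M$, and by Proposition~\ref{pro:LS1}(3) in case~(1) and Proposition~\ref{pro:LS2}(4) in cases~(2) and~(3) (the extra roots contributed by $r$ having modulus $1<\lambda$) every other eigenvalue of $M$ has modulus strictly smaller than $\lambda$; thus $\lambda$ is the unique eigenvalue of maximal modulus, and an irreducible nonnegative matrix with this property is primitive. By Lemma~\ref{lemma:DM} the gcd of the lengths of the cycles of $G$ is~$1$. The simple cycles of $G$ are the $n$‑cycle together with, for each $i$ with $a_i\ne0$, one cycle of length $n-i$, so $\gcd(\{n\}\cup\{n-i:a_i\ne0\})=1$. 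As $\gcd(n,n-i)=\gcd(n,i)$ and, by the congruences already proved, the reduction modulo~$2$ of the polynomial is palindromic (both extreme coefficients being $\equiv1$), one can now select an index $i$ in the prescribed range $1\le i\le k-1$ (respectively $1\le i\le 2k-1$) with $\gcd(n,n-i)=1$ and $a_i\ne0$, which is exactly the remaining assertion.

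The congruences are routine once the correct reciprocity is identified, so I expect the main work to lie in the second half. Two points there need care: first, justifying cleanly that ``irreducible plus unique eigenvalue of maximal modulus'' forces primitivity, so that Lemma~\ref{lemma:DM} applies; and second --- the genuinely delicate point --- extracting from $\gcd(\{n\}\cup\{n-i:a_i\ne0\})=1$ a \emph{single} nonzero coefficient whose index is coprime to $n$ and lies in the stated range. This does not follow from the gcd condition alone, so one has to feed in the palindromic mod‑$2$ structure --- replacing an index $i$ by $n-i$ (legitimate since $a_i\equiv a_{n-i}$) and ruling out the degenerate configurations in which the whole support is confined to indices sharing a nontrivial common factor with $n$ --- and this is the step I would expect to be the crux of the argument.
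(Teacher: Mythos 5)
Your first half (the congruences) is fine and is exactly the paper's route: mod-$2$ reciprocity from Proposition~\ref{pro:LS1}(4), the anti-reciprocity $p(x)=-x^{4k-2}p(-x^{-1})$ from Proposition~\ref{pro:LS2}(3), and the observation that each extra factor is $x^2+1$ or $x^2+x+1$ mod $2$. Your setup for the second half is also the paper's intended one (note that, like the paper, you must import the nonnegativity of the $a_i$ from the hypotheses of Theorems~\ref{thm:N} and~\ref{thm:S}, since the lemma as stated does not assume it): the transpose companion matrix is nonnegative, irreducible because of the full $n$-cycle, has $\lambda$ as its unique eigenvalue of maximal modulus, hence is primitive, and Lemma~\ref{lemma:DM} then gives $\gcd\bigl(\{n\}\cup\{\,n-i : a_i\neq 0\,\}\bigr)=1$.

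The genuine gap is the last step, and you concede it yourself: ``one can now select an index $i$ \dots'' is asserted, not proved, and your closing paragraph admits that it does not follow from the gcd condition and would be ``the crux.'' It is not a routine verification, because the properties you actually feed into the argument (monic, constant term $-1$, reciprocal mod $2$, primitive companion matrix, hence a unique dominant eigenvalue) do \emph{not} imply the conclusion. For instance, with $n=2k-1=15$ the polynomial $x^{15}-x^{10}-x^{9}-x^{6}-x^{5}-1$ is reciprocal mod $2$ and its companion digraph has cycle lengths $15,10,9,6,5$ with gcd $1$, so the matrix is primitive; yet every index $i$ in its support satisfies $\gcd(15,15-i)\in\{3,5\}$, so no single coefficient of the required kind exists. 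Hence no argument using only mod-$2$ reciprocity and primitivity can produce the claimed index; some further input about which characteristic polynomials actually occur would be needed. A second, smaller defect is your reduction to the range $1\le i\le k-1$ (resp.\ $2k-1$) by ``replacing $i$ by $n-i$'': the congruence $a_i\equiv a_{n-i}\pmod 2$ transfers nonvanishing only when $a_i$ is odd, and a nonzero even $a_i$ is compatible with $a_{n-i}=0$. For comparison, the paper's own proof of this lemma is a bare citation of the same two ingredients (Propositions~\ref{pro:LS1}/\ref{pro:LS2} and Lemma~\ref{lemma:DM}) and supplies no argument for this step either; your write-up faithfully reconstructs that outline but stops exactly where the real difficulty begins.
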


\begin{proof}
\begin{enumerate}
\item[] 
\item[] (1) can be obtained from Proposition~\ref{pro:LS1} (1) and (4) and Lemma~\ref{lemma:DM}.

\item[] (2) can be obtained from Proposition~\ref{pro:LS2} (1) and (3) and Lemma~\ref{lemma:DM}.

\item[] Note that $p(x) (x \pm 1)^2$, $p(x) (x^2 \pm 1)$, or $p(x) (x^2 \pm x + 1)$ is either $p(x) (x^2 + 1)$, or $p(x) (x^2 + x + 1)$ mod $2$. Hence,
\item[] (3) can be obtained from (2) and Lemma~\ref{lemma:DM}.
\end{enumerate}
\end{proof}

By a spectral radius of a polynomial, we mean the spectral radius of the Frobenius companion matrix of it.

\begin{lemma} \label{lemma:minispec}
Let $n \geq 2$.
$$ x^{2n}-x^{n+1}-x^{n-1}-1$$ gives the minimum spectral radius among the polynomials $x^{2n}-a_{2n-1}x^{2n-1} \cdots -a_{1}x-1$ with $a_{i} \equiv a_{2 n-i}=1$
for only one $i$ ($1 \leq i \leq  n-1$) and $a_{j}=0$ for all $j$ with $j \neq i$ and $j \neq 2 n-i$.
\end{lemma}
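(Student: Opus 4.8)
The plan is to reduce the assertion to an elementary monotonicity estimate for the Perron root as the exponent varies. By hypothesis the polynomials being compared are exactly $p_i(x)=x^{2n}-x^{2n-i}-x^i-1$ for $1\le i\le n-1$, and the claimed minimizer is $p_{n-1}(x)=x^{2n}-x^{n+1}-x^{n-1}-1$ (note $2n-(n-1)=n+1$). The device is a rescaling: dividing $p_i(\mu)=0$ by $\mu^{2n}$ shows that a number $\mu>1$ is a root of $p_i$ if and only if $g(\mu,i)=1$, where $g(x,i):=x^{-i}+x^{i-2n}+x^{-2n}$; from here on I would treat $g$ as a smooth function of a real variable $x>1$ and a real parameter $i\in[1,n-1]$.

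The heart of the argument is that $g$ is strictly decreasing in each of its two arguments on this range. It decreases in $x$ because all three exponents $-i$, $i-2n$ and $-2n$ are negative (here $i-2n\le-n-1$ since $i\le n-1$). It decreases in $i$ because $\partial g/\partial i=\ln(x)\,x^{-i}(x^{2i-2n}-1)$, which is negative for $x>1$ and $i\le n-1$, as then $2i-2n<0$. I would first use the monotonicity in $x$ to pin down the spectral radius: the transpose of the Frobenius companion matrix of $p_i$ is a nonnegative matrix whose digraph consists of the cycle $1\to2\to\cdots\to 2n\to1$ together with the extra edges out of the vertex $2n$ coming from the two nonzero middle coefficients, hence is strongly connected and Perron--Frobenius applies; and since $1-g(x,i)$ is strictly increasing on $(1,\infty)$ from $-2$ toward $1$, while $p_i(x)<x^{2n}-1\le0$ for $0<x\le1$, the polynomial $p_i$ has a unique positive root $\lambda_i$, lying in $(1,\infty)$, which is therefore the Perron root, i.e.\ the spectral radius of its companion matrix.

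The comparison then follows from the monotonicity in $i$: for $1\le i<i'\le n-1$ one has $g(\lambda_i,i')<g(\lambda_i,i)=1=g(\lambda_{i'},i')$, and since $x\mapsto g(x,i')$ is strictly decreasing on $(1,\infty)$ this forces $\lambda_i>\lambda_{i'}$. Hence $\lambda_1>\lambda_2>\cdots>\lambda_{n-1}$, so the minimum spectral radius is $\lambda_{n-1}$, attained by $x^{2n}-x^{n+1}-x^{n-1}-1$; for $n=2$ there is a single admissible polynomial and nothing to prove. I expect the only point requiring genuine care to be the bookkeeping in the second paragraph — confirming that the spectral radius of the companion matrix really is the largest positive real root of $p_i$ (strong connectedness of the digraph, and existence and uniqueness of that root) — whereas, once $g$ and its two sign estimates are in hand, both the identification of $\lambda_i$ and the monotonicity comparison are only a couple of lines each.
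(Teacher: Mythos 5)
Your proof is correct and follows essentially the same route as the paper: both arguments treat the exponent $i$ in $x^{2n}-x^{2n-i}-x^{i}-1$ as a real parameter, show the polynomial (after your cosmetic rescaling by $x^{-2n}$) depends monotonically on $i$ for $x>1$, identify the spectral radius of the companion matrix with the unique root greater than $1$, and conclude the minimum occurs at $i=n-1$. The only notable difference is that you compare the largest roots directly from the monotonicity in $i$, which is in fact cleaner than the paper's concluding appeal to the Perron--Frobenius comparison lemma (the companion matrices for different $i$ are not entrywise comparable), so your version closes that step more carefully.
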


\begin{proof}
Let $g(x)$ be $g(x)=x^{2n}-x^{2n-i}-x^{i}-1$. Then $g(1)=-2 <0$ and $g(2)=2^{2n}-2^{2n-i}-2^{i}-1=(2^{2n-i}-1)(2^{i}-1)-2>0$.
$g^{\prime}(x)=(2 n) x^{2n-1}-(2n-i) x^{2n-i-1}-i x^{i-1}$. $g^{\prime}(1)=2n-(2n-i) -i=0$ and
$g^{\prime}(x)=x^{2 n-1} \left((2n) -(2n-i-1) x^{-i}-i x^{-2n+i}\right) \geq 0$ for $x \geq1$.
Hence the largest root of $g(x)$ lies between $1$ and $2$ and it is the only root bigger than $1$.
We can regard $g$ as a two variable polynomial $G(x,i)$.
$$\frac{\partial G}{\partial i}=x^i \ln{x} \left(x^{2 n-2 i}-1\right) \geq 0$$ if $x \geq 1$.
Hence, $$x^{2n}-x^{2n+1}-x^{2n-1}-1$$ gives the minimum spectral  cosidering Lemma~\ref{lemma:Perron}.
\end{proof}

\section{Proof of Theorem~\ref{thm:N} and Theorem~\ref{thm:S}}
Given a pseudo-Anosov homeomorphism $\psi$, let $\psi^{*}$ be the induced action of $\psi$ on the first cohomology and
$p_{\psi}(x)$ be the characteristic polynomial of $\psi^{*}$.

\subsection{Proof of Theorem~\ref{thm:N}}

Let $\Psi$ be the following set:

$$\begin{array}{ll}
 \Psi = &
\! \! \! \begin{Bmatrix}
 \psi \, : &\! \! \! \text{$\psi$  is a pseudo-Anosov homeomorphism of $N_{2k}$ with an orientable invariant }   \! \! \\
 &\! \! \!  \text{foliation and $p_{\psi}(x)$ has nonpositive coefficients except the first one \qquad \qquad    } \! \! \! \\
\end{Bmatrix}. 
\end{array}$$

Let $\Lambda$ be the following set: 
$$\Lambda=\{\lambda \, : \, \lambda \text{ is the dilatation of a pseudo-Anosov homeomorphism $\psi$, } \psi \in \Psi \}.$$ 

Let $\Gc$ be the following set: 

$$\begin{array}{ll}
 \Gc = &
 \begin{Bmatrix}
  p_{\psi}(x) \, : \, \psi \in \Psi\\
 \end{Bmatrix}. 
\end{array}$$

Note that $$x^{2k-1}-x^k-x^{k-1}-1$$ is in $\Gc$ by~\cite[Proposition 2.6]{LiechtiStrenner18}. 
 Since any polynomial in $\Gc$ can be considered as the characteristic polynomial of nonnegative Frobenius companion matrix which is also primitive,
the largest root of $$x^{2k-1}-x^k-x^{k-1}-1$$ is the minimal dilation among $\lambda$'s in $\Lambda$ 
by Lemma~\ref{lemma:mod2}  (1), Lemma~\ref{lemma:minispec}, and Lemma~\ref{lemma:Perron}.

\subsection{Proof of Theorem~\ref{thm:S}}
Let $\Psi$ be the following set:

$$\begin{array}{ll}
 \Psi = &
 \! \! \! \begin{Bmatrix}
  &\! \! \! \text{$\psi$  is a pseudo-Anosov homeomorphism of $S_{2k-1}$, which is orientation} \\
\psi \, : &\! \! \!  \text{ reversing with orientable  invariant foliations and either $p_{\psi}(x)$ or one}\\
  &\! \! \!  \text{ of $p(x) (x \pm 1)^2$, $p(x) (x^2 \pm 1)$, or $p(x) (x^2 \pm x + 1)$ has nonpositive \quad \ }\\
  &\! \! \!  \text{coefficients except the first one  \qquad \qquad \qquad \qquad \qquad \qquad \qquad \quad \quad }\\
 \end{Bmatrix}. 
\end{array}$$

Let $\Lambda$ be the following set: 
$$\Lambda=\{\lambda \, : \, \lambda \text{ is the dilatation of a pseudo-Anosov homeomorphism $\psi$, } \psi \in \Psi \}.$$ 

Let $\Gc$ be the following set: 

$$\begin{array}{lll}
 \Gc = &
\! \! \!  \begin{Bmatrix}
& \! \! \!  \text{g(x) has nonpositive coefficients except the first one and } g(x) \text{ is }\\
 g(x) \, : \,  &\! \! \!  \text{ one of } p_{\psi}(x),\ p_{\psi}(x) (x \pm 1)^2,\ p_{\psi}(x) (x^2 \pm 1), \text{ or } p_{\psi}(x) (x^2 \pm x + 1), \\
&\! \! \!  \psi \in \Psi  \qquad \qquad \qquad \qquad \qquad \qquad \qquad\qquad \qquad \qquad \qquad \quad\\
 \end{Bmatrix}. 
\end{array}$$

Note that $$x^{4k}-x^{2k+1}-x^{2k-1}-1$$ is in $\Gc$ by~\cite[Proposition 3.3]{LiechtiStrenner18}. Observe that
the largest positive root of $$x^{4k}-x^{2k+1}-x^{2k-1}-1$$ is less than the largest positive root of $$x^{4k-2}-x^{2k}-x^{2k-2}-1.$$
Since any polynomial in $\Gc$ can be considered as the characteristic polynomial of nonnegative Frobenius companion matrix which is also primitive, the largest positive root of
$$x^{4k}-x^{2k+1}-x^{2k-1}-1$$ is the minimal dilation among $\lambda$'s in $\Lambda$ by Lemma~\ref{lemma:mod2} (2) \& (3), Lemma~\ref{lemma:minispec}, and Lemma~\ref{lemma:Perron}.

\section{acknowledgement}
We thank Erwan Lanneau, Livio Liechti, Julien Marché, Alan Reid, Darren Long and anonymous referees. This work was supported by Basic Science Research Program through the National Research Foundation of Korea (NRF) funded by the Ministry of Education, Science and Technology (No. NRF-2018R1A2B6005847). The second author was supported by 2018 Hongik University Research Fund.

\providecommand{\bysame}{\leavevmode\hbox to3em{\hrulefill}\thinspace}
\providecommand{\MR}{\relax\ifhmode\unskip\space\fi MR }
\providecommand{\MRhref}[2]{%
  \href{http://www.ams.org/mathscinet-getitem?mr=#1}{#2}
}
\providecommand{\href}[2]{#2}


\begin{thebibliography}{Thu88}

\bibitem[Abi80]{Abikoff80}
William Abikoff, \emph{The real analytic theory of {T}eichm\"uller space},
  Lecture Notes in Mathematics, vol. 820, Springer, Berlin, 1980. \MR{590044}

\bibitem[AD10]{AaberDunfield10}
John~W. Aaber and Nathan Dunfield, \emph{Closed surface bundles of least
  volume}, Algebr. Geom. Topol. \textbf{10} (2010), no.~4, 2315--2342.
  \MR{2745673}

\bibitem[AY81]{ArnouxYoccoz81}
Pierre Arnoux and Jean-Christophe Yoccoz, \emph{Construction de
  diff\'{e}omorphismes pseudo-{A}nosov}, C. R. Acad. Sci. Paris S\'{e}r. I
  Math. \textbf{292} (1981), no.~1, 75--78. \MR{610152}

\bibitem[Bau92]{Bauer92}
Max Bauer, \emph{An upper bound for the least dilatation}, Trans. Amer. Math.
  Soc. \textbf{330} (1992), no.~1, 361--370. \MR{1094556}

\bibitem[CB88]{CassonBleiler88}
Andrew~J. Casson and Steven~A. Bleiler, \emph{Automorphisms of surfaces after
  {N}ielsen and {T}hurston}, London Mathematical Society Student Texts, vol.~9,
  Cambridge University Press, Cambridge, 1988. \MR{964685}

\bibitem[CH08]{ChoHam08}
Jin-Hwan Cho and Ji-Young Ham, \emph{The minimal dilatation of a genus-two
  surface}, Experiment. Math. \textbf{17} (2008), no.~3, 257--267. \MR{2455699}

\bibitem[DM67]{DulmageMendelsohn67}
A.~L. Dulmage and N.~S. Mendelsohn, \emph{Graphs and matrices}, Graph {T}heory
  and {T}heoretical {P}hysics, Academic Press, London, 1967, pp.~167--227.
  (loose errata). \MR{0252247}

\bibitem[Hir10]{Hironaka10}
Eriko Hironaka, \emph{Small dilatation mapping classes coming from the simplest
  hyperbolic braid}, Algebr. Geom. Topol. \textbf{10} (2010), no.~4,
  2041--2060. \MR{2728483}

\bibitem[Iva88]{Ivanov88}
N.~V. Ivanov, \emph{Coefficients of expansion of pseudo-{A}nosov
  homeomorphisms}, Zap. Nauchn. Sem. Leningrad. Otdel. Mat. Inst. Steklov.
  (LOMI) \textbf{167} (1988), no.~Issled. Topol. 6, 111--116, 191. \MR{964259}

\bibitem[KT13]{KinTakasawa13}
Eiko Kin and Mitsuhiko Takasawa, \emph{Pseudo-{A}nosovs on closed surfaces
  having small entropy and the {W}hitehead sister link exterior}, J. Math. Soc.
  Japan \textbf{65} (2013), no.~2, 411--446. \MR{3055592}

\bibitem[Lei04]{Leininger04}
Christopher~J. Leininger, \emph{On groups generated by two positive
  multi-twists: {T}eichm\"{u}ller curves and {L}ehmer's number}, Geom. Topol.
  \textbf{8} (2004), 1301--1359. \MR{2119298}

\bibitem[LS18]{LiechtiStrenner18}
Livio Liechti and Bal\'{a}zs Strenner, \emph{Minimal pseudo-{A}nosov stretch
  factors on nonorientable surfaces}, \url{arXiv:1806.00033}, 2018, To appear
  in Algebr. Geom. Topol.

\bibitem[LT11]{LanneauThiffeault11b}
Erwan Lanneau and Jean-Luc Thiffeault, \emph{On the minimum dilatation of
  pseudo-{A}nosov homeromorphisms on surfaces of small genus}, Ann. Inst.
  Fourier (Grenoble) \textbf{61} (2011), no.~1, 105--144. \MR{2828128}

\bibitem[Pen91]{Penner91}
R.~C. Penner, \emph{Bounds on least dilatations}, Proc. Amer. Math. Soc.
  \textbf{113} (1991), no.~2, 443--450. \MR{1068128}

\bibitem[Thu88]{Thurston88}
William~P. Thurston, \emph{On the geometry and dynamics of diffeomorphisms of
  surfaces}, Bull. Amer. Math. Soc. (N.S.) \textbf{19} (1988), no.~2, 417--431.
  \MR{956596}

\bibitem[Zhi95]{Zhirov95}
A.~Yu. Zhirov, \emph{On the minimum dilation of pseudo-{A}nosov diffeomorphisms
  of a double torus}, Uspekhi Mat. Nauk \textbf{50} (1995), no.~1(301),
  197--198. \MR{1331364}

\end{thebibliography}
\end{document}